\numberwithin{equation}{section}
\definecolor{webgreen}{rgb}{0,.5,0}
\definecolor{webbrown}{rgb}{.6,0,0}
\newcommand{\Z}{{\mathbb Z}}
\newtheorem{thm}{Theorem}
\newtheorem{theorem}[thm]{Theorem}
\newtheorem{corollary}[thm]{Corollary}
\title{New Tribonacci Recurrence Relations and Addition Formulas}
\author[]{Kunle Adegoke\thanks{Corresponding author: \href{mailto:adegoke00@gmail.com}{\tt adegoke00@gmail.com}}}
\author{Adenike Olatinwo}
\author{Winning Oyekanmi}
\affil{Department of Physics and Engineering Physics, \mbox{Obafemi Awolowo University}, 220005 Ile-Ife, Nigeria}
\begin{document}

\date{}

\maketitle

\begin{abstract}
\noindent Only one three-term recurrence relation, namely, $W_{r}=2W_{r-1}-W_{r-4}$, is known for the generalized Tribonacci numbers, $W_r$, $r\in\Z$, defined by $W_{r}=W_{r-1}+W_{r-2}+W_{r-3}$ and \mbox{$W_{-r}=W_{-r+3}-W_{-r+2}-W_{-r+1}$}, where $W_0$, $W_1$ and $W_2$ are given, arbitrary integers, not all zero. Also, only one four-term addition formula is known for these numbers, which is, $W_{r + s}  = T_{s - 1} W_{r - 1}  + (T_{s - 1}  + T_{s-2} )W_r  + T_s W_{r + 1}$, where $({T_r})_{r\in\Z}$ is the Tribonacci sequence, a special case of the generalized Tribonacci sequence, with $W_0=T_0=0$ and $W_1=W_2=T_1=T_2=1$. In this paper we discover three new three-term recurrence relations and two identities from which a plethora of new addition formulas for the generalized Tribonacci numbers may be discovered. We obtain a simple relation connecting the Tribonacci numbers and the Tribonacci-Lucas numbers. Finally, we derive quadratic and cubic recurrence relations for the generalized Tribonacci numbers.

\end{abstract}
\section{Introduction}
For $r\ge 3$, we define the generalized Tribonacci numbers $W_r$ by the third order recurrence relation:
\begin{equation}\label{eq.qqyiuit}
W_{r}=W_{r-1}+W_{r-2}+W_{r-3}\,,
\end{equation}
where $W_0$, $W_1$ and $W_2$ are arbitrary integers. By writing $W_{r-1}=W_{r-2}+W_{r-3}+W_{r-4}$ and subtracting this from relation \eqref{eq.qqyiuit}, we see that $W_r$ also obeys the useful three-term recurrence
\begin{equation}\label{eq.ccgxij2}
W_r=2W_{r-1}-W_{r-4}\,.
\end{equation}
Extension of the definition of the generalized Tribonacci numbers to negative subscripts is provided by writing identity \eqref{eq.ccgxij2} as $W_{r+4}=2W_{r+3}-W_{r}$; so that
\begin{equation}
W_{ - r}  = 2W_{ - r + 3}  - W_{ - r + 4}\,.
\end{equation}
Well known examples of $W_r$ are the Tribonacci sequence, $(T_r)$, $r\in\Z$, for which $W=T$, $W_0=T_0=0$, $W_1=W_2=T_1=T_2=1$ and the Tribonacci-Lucas sequence , $(K_r)$, $r\in\Z$, for which $W=K$, $W_0=K_0=3$, $W_1=K_1=1$, $W_2=K_2=3$. Table \ref{tab.ms2wgo8} shows the first few Tribonacci and Tribonacci-Lucas numbers for $-20\le r\le 24$.
\begin{table}[h!]
\begin{tabular}{lllllllllll}
\multicolumn{1}{c}{$r$} & \multicolumn{1}{c}{} & \multicolumn{1}{c}{$-20$} & \multicolumn{1}{c}{$-19$} & \multicolumn{1}{c}{$-18$} & \multicolumn{1}{c}{$-17$} & \multicolumn{1}{c}{$-16$} & \multicolumn{1}{c}{$-15$} & \multicolumn{1}{c}{$-14$} & \multicolumn{1}{c}{$-13$} & \multicolumn{1}{c}{$-12$} \\ 
\hline
\multicolumn{1}{c}{$T_r$} & \multicolumn{1}{c}{} & \multicolumn{1}{c}{$-56$} & \multicolumn{1}{c}{$159$} & \multicolumn{1}{c}{$-103$} & \multicolumn{1}{c}{$0$} & \multicolumn{1}{c}{$56$} & \multicolumn{1}{c}{$-47$} & \multicolumn{1}{c}{$9$} & \multicolumn{1}{c}{$18$} & \multicolumn{1}{c}{$-20$} \\ 
\multicolumn{1}{c}{$K_r$} & \multicolumn{1}{c}{} & \multicolumn{1}{c}{$795$} & \multicolumn{1}{c}{$-571$} & \multicolumn{1}{c}{$47$} & \multicolumn{1}{c}{$271$} & \multicolumn{1}{c}{$-253$} & \multicolumn{1}{c}{$65$} & \multicolumn{1}{c}{$83$} & \multicolumn{1}{c}{$-105$} & \multicolumn{1}{c}{$43$} \\ 
\multicolumn{1}{c}{} & \multicolumn{1}{c}{} & \multicolumn{1}{c}{} & \multicolumn{1}{c}{} & \multicolumn{1}{c}{} & \multicolumn{1}{c}{} & \multicolumn{1}{c}{} & \multicolumn{1}{c}{} & \multicolumn{1}{c}{} & \multicolumn{1}{c}{} & \multicolumn{1}{c}{} \\ 
\multicolumn{1}{c}{$r$} & \multicolumn{1}{c}{} & \multicolumn{1}{c}{$-11$} & \multicolumn{1}{c}{$-10$} & \multicolumn{1}{c}{$-9$} & \multicolumn{1}{c}{$-8$} & \multicolumn{1}{c}{$-7$} & \multicolumn{1}{c}{$-6$} & \multicolumn{1}{c}{$-5$} & \multicolumn{1}{c}{$-4$} & \multicolumn{1}{c}{$-3$} \\ 
\hline
\multicolumn{1}{c}{$T_r$} & \multicolumn{1}{c}{} & \multicolumn{1}{c}{$7$} & \multicolumn{1}{c}{$5$} & \multicolumn{1}{c}{$-8$} & \multicolumn{1}{c}{$4$} & \multicolumn{1}{c}{$1$} & \multicolumn{1}{c}{$-3$} & \multicolumn{1}{c}{$2$} & \multicolumn{1}{c}{$0$} & \multicolumn{1}{c}{$-1$} \\ 
\multicolumn{1}{c}{$K_r$} & \multicolumn{1}{c}{} & \multicolumn{1}{c}{$21$} & \multicolumn{1}{c}{$-41$} & \multicolumn{1}{c}{$23$} & \multicolumn{1}{c}{$3$} & \multicolumn{1}{c}{$-15$} & \multicolumn{1}{c}{$11$} & \multicolumn{1}{c}{$-1$} & \multicolumn{1}{c}{$-5$} & \multicolumn{1}{c}{$5$} \\ 
\multicolumn{1}{c}{} & \multicolumn{1}{c}{} & \multicolumn{1}{c}{} & \multicolumn{1}{c}{} & \multicolumn{1}{c}{} & \multicolumn{1}{c}{} & \multicolumn{1}{c}{} & \multicolumn{1}{c}{} & \multicolumn{1}{c}{} & \multicolumn{1}{c}{} & \multicolumn{1}{c}{} \\ 
\multicolumn{1}{c}{$r$} & \multicolumn{1}{c}{} & \multicolumn{1}{c}{$-2$} & \multicolumn{1}{c}{$-1$} & \multicolumn{1}{c}{$0$} & \multicolumn{1}{c}{$1$} & \multicolumn{1}{c}{$2$} & \multicolumn{1}{c}{$3$} & \multicolumn{1}{c}{$4$} & \multicolumn{1}{c}{$5$} & \multicolumn{1}{c}{$6$} \\ 
\hline
\multicolumn{1}{c}{$T_r$} & \multicolumn{1}{c}{} & \multicolumn{1}{c}{$1$} & \multicolumn{1}{c}{$0$} & \multicolumn{1}{c}{$0$} & \multicolumn{1}{c}{$1$} & \multicolumn{1}{c}{$1$} & \multicolumn{1}{c}{$2$} & \multicolumn{1}{c}{$4$} & \multicolumn{1}{c}{$7$} & \multicolumn{1}{c}{$13$} \\ 
\multicolumn{1}{c}{$K_r$} & \multicolumn{1}{c}{} & \multicolumn{1}{c}{$-1$} & \multicolumn{1}{c}{$-1$} & \multicolumn{1}{c}{$3$} & \multicolumn{1}{c}{$1$} & \multicolumn{1}{c}{$3$} & \multicolumn{1}{c}{$7$} & \multicolumn{1}{c}{$11$} & \multicolumn{1}{c}{$21$} & \multicolumn{1}{c}{$39$} \\ 
\multicolumn{1}{c}{} & \multicolumn{1}{c}{} & \multicolumn{1}{c}{} & \multicolumn{1}{c}{} & \multicolumn{1}{c}{} & \multicolumn{1}{c}{} & \multicolumn{1}{c}{} & \multicolumn{1}{c}{} & \multicolumn{1}{c}{} & \multicolumn{1}{c}{} & \multicolumn{1}{c}{} \\ 
\multicolumn{1}{c}{$r$} & \multicolumn{1}{c}{} & \multicolumn{1}{c}{$7$} & \multicolumn{1}{c}{$8$} & \multicolumn{1}{c}{$9$} & \multicolumn{1}{c}{$10$} & \multicolumn{1}{c}{$11$} & \multicolumn{1}{c}{$12$} & \multicolumn{1}{c}{$13$} & \multicolumn{1}{c}{$14$} & \multicolumn{1}{c}{$15$} \\ 
\hline
\multicolumn{1}{c}{$T_r$} & \multicolumn{1}{c}{} & \multicolumn{1}{c}{$24$} & \multicolumn{1}{c}{$44$} & \multicolumn{1}{c}{$81$} & \multicolumn{1}{c}{$149$} & \multicolumn{1}{c}{$274$} & \multicolumn{1}{c}{$504$} & \multicolumn{1}{c}{$927$} & \multicolumn{1}{c}{$1705$} & \multicolumn{1}{c}{$3136$} \\ 
\multicolumn{1}{c}{$K_r$} & \multicolumn{1}{c}{} & \multicolumn{1}{c}{$71$} & \multicolumn{1}{c}{$131$} & \multicolumn{1}{c}{$241$} & \multicolumn{1}{c}{$443$} & \multicolumn{1}{c}{$815$} & \multicolumn{1}{c}{$1499$} & \multicolumn{1}{c}{$2757$} & \multicolumn{1}{c}{$5071$} & \multicolumn{1}{c}{$9327$} \\ 
\multicolumn{1}{c}{} & \multicolumn{1}{c}{} & \multicolumn{1}{c}{} & \multicolumn{1}{c}{} & \multicolumn{1}{c}{} & \multicolumn{1}{c}{} & \multicolumn{1}{c}{} & \multicolumn{1}{c}{} & \multicolumn{1}{c}{} & \multicolumn{1}{c}{} & \multicolumn{1}{c}{} \\ 
\multicolumn{1}{c}{$r$} & \multicolumn{1}{c}{} & \multicolumn{1}{c}{$16$} & \multicolumn{1}{c}{$17$} & \multicolumn{1}{c}{$18$} & \multicolumn{1}{c}{$19$} & \multicolumn{1}{c}{$20$} & \multicolumn{1}{c}{$21$} & \multicolumn{1}{c}{$22$} & \multicolumn{1}{c}{$23$} & \multicolumn{1}{c}{$24$} \\ 
\hline
\multicolumn{1}{c}{$T_r$} & \multicolumn{1}{c}{} & \multicolumn{1}{c}{$5768$} & \multicolumn{1}{c}{$10609$} & \multicolumn{1}{c}{$19513$} & \multicolumn{1}{c}{$35890$} & \multicolumn{1}{c}{$66012$} & \multicolumn{1}{c}{$121415$} & \multicolumn{1}{c}{$223317$} & \multicolumn{1}{c}{$410744$} & \multicolumn{1}{c}{$755476$} \\ 
\multicolumn{1}{c}{$K_r$} & \multicolumn{1}{c}{} & \multicolumn{1}{c}{$17155$} & \multicolumn{1}{c}{$31553$} & \multicolumn{1}{c}{$58035$} & \multicolumn{1}{c}{$106743$} & \multicolumn{1}{c}{$196331$} & \multicolumn{1}{c}{$361109$} & \multicolumn{1}{c}{$664183$} & \multicolumn{1}{c}{$1221623$} & \multicolumn{1}{c}{$2246915$} \\ 
\end{tabular}
\caption{The first few Tribonacci and Tribonacci-Lucas numbers.}
\label{tab.ms2wgo8}
\end{table}

\medskip

The following references contain useful information on the properties of the Tribonacci numbers and related: \cite{adegoke18c,feng11,frontczak18, gabai70,shah11,waddill67}.

\medskip

Among other interesting results, we found the following three-term recurrence relations which are presumably new:
\[
W_{r-16}=-103W_r+56W_{r+1}\,,
\]
\[
2W_{r-17}=9W_r-103W_{r-4}
\]
and
\[
W_{r-14}=9W_{r+2}-56W_{r-1}\,.
\]
We also found a simple relation linking the Tribonacci numbers and the Tribonacci-Lucas numbers:
\[
K_{r-2}=5T_{r-1}-T_{r+1}\,.
\]
\section{Linear recurrence relations and addition formulas}
\begin{theorem}\label{thm.ash3b4f}
The following identity holds for integers $a$, $b$, $c$, $d$ and $e$:
\[
\begin{split}
&(T_{a-c}T_{c-b}T_{b-a}+T_{b-c}T_{c-a}T_{a-b})W_{d+e}\\
&=\quad(T_{b-c}T_{c-a}T_{e-b}-T_{b-c}T_{c-b}T_{e-a}+T_{c-b}T_{b-a}T_{e-c})W_{d+a}\\
&\qquad+(T_{a-c}T_{c-b}T_{e-a}-T_{a-c}T_{c-a}T_{e-b}+T_{c-a}T_{a-b}T_{e-c})W_{d+b}\\
&\quad\qquad+(T_{b-c}T_{a-b}T_{e-a}-T_{a-b}T_{b-a}T_{e-c}+T_{a-c}T_{b-a}T_{e-b})W_{d+c}
\end{split}
\]

\end{theorem}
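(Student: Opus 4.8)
The plan is to exploit the fact that the defining recurrence \eqref{eq.qqyiuit} has order three, so its solution space $V$---the set of all sequences $(U_r)_{r\in\Z}$ satisfying $U_r=U_{r-1}+U_{r-2}+U_{r-3}$ for every $r\in\Z$---is three-dimensional. Two observations drive everything. First, $n\mapsto W_{d+n}$ is a member of $V$, since it is merely $W$ with its index shifted. Second, for each fixed integer $t$ the shifted Tribonacci sequence $n\mapsto T_{n-t}$ also lies in $V$. I would build the identity around the three auxiliary solutions $T_{\,\cdot\,-a}$, $T_{\,\cdot\,-b}$, $T_{\,\cdot\,-c}$, chosen precisely because the values $T_{a-a}=T_{b-b}=T_{c-c}=T_0=0$ will later annihilate a diagonal.

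First I would form the $4\times 4$ determinant
\[
\Delta=\det\begin{pmatrix}
W_{d+a} & W_{d+b} & W_{d+c} & W_{d+e}\\
T_{a-a} & T_{b-a} & T_{c-a} & T_{e-a}\\
T_{a-b} & T_{b-b} & T_{c-b} & T_{e-b}\\
T_{a-c} & T_{b-c} & T_{c-c} & T_{e-c}
\end{pmatrix}.
\]
Each of its four rows is the image of a member of $V$ under the evaluation map $U\mapsto(U_a,U_b,U_c,U_e)$ into $\R^4$. Since $V$ is three-dimensional, this map has rank at most three, so the four rows lie in a common three-dimensional subspace of $\R^4$ and are therefore linearly dependent; hence $\Delta=0$. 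This single dimension count is the entire conceptual content of the theorem, and it imposes no non-degeneracy hypotheses on $a$, $b$, $c$, $d$, $e$.

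Next I would expand $\Delta$ along its first row and solve for $W_{d+e}$. Using $T_{a-a}=T_{b-b}=T_{c-c}=0$, the minor multiplying $W_{d+e}$ is the zero-diagonal determinant
\[
\det\begin{pmatrix} 0 & T_{b-a} & T_{c-a}\\ T_{a-b} & 0 & T_{c-b}\\ T_{a-c} & T_{b-c} & 0\end{pmatrix}=T_{a-c}T_{c-b}T_{b-a}+T_{b-c}T_{c-a}T_{a-b},
\]
which is exactly the coefficient of $W_{d+e}$ asserted in the statement. The remaining three minors multiply $W_{d+a}$, $W_{d+b}$, $W_{d+c}$; each has one zero on its diagonal, and expanding along the column carrying that zero produces a sum of three triple products. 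Bringing the $W_{d+e}$ term to one side and recording the alternating cofactor signs $(-1)^{1+j}$ then yields precisely the asserted identity.

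The dimension argument is immediate, so the only real work---and the only place an error can hide---is the sign and coefficient bookkeeping in this last step. I would verify one coefficient in full: the minor attached to $W_{d+a}$ expands to $T_{c-b}T_{b-a}T_{e-c}+T_{b-c}T_{c-a}T_{e-b}-T_{b-c}T_{c-b}T_{e-a}$, matching the first bracket on the right-hand side. Rather than expand the other two minors by hand, I would invoke the symmetry of the construction under permuting $a$, $b$, $c$: any such permutation simultaneously permutes rows $2,3,4$ and columns $1,2,3$ of $\Delta$, leaving $\Delta$ invariant while sending the cofactor of $W_{d+a}$ to that of the correspondingly relabelled variable. One checks directly that the stated coefficients of $W_{d+b}$ and $W_{d+c}$ are indeed obtained from the coefficient of $W_{d+a}$ by the transpositions $a\leftrightarrow b$ and $a\leftrightarrow c$, so a single verified expansion settles all three.
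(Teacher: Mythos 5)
Your proof is correct, and it is a genuinely tighter packaging of the same underlying linear algebra as the paper's. The paper posits the ansatz $W_{d+e}=f_1T_{e-a}+f_2T_{e-b}+f_3T_{e-c}$ with $e$-independent coefficients, evaluates at $e=a,b,c$ (using $T_0=0$, exactly as you do), solves the resulting $3\times 3$ system, and substitutes back. That argument has two tacit assumptions your version eliminates: first, that such a representation exists at all (i.e., that the three shifted Tribonacci sequences span the solution space of the recurrence), and second, that the system is solvable, which requires the determinant $T_{a-c}T_{c-b}T_{b-a}+T_{b-c}T_{c-a}T_{a-b}$ to be nonzero -- so strictly speaking the paper's derivation only covers non-degenerate choices of $a,b,c$, with the cleared-denominator identity in the degenerate cases (e.g.\ $a=b$, where that coefficient vanishes) left to a limiting or separate check. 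Your $4\times 4$ determinant $\Delta$ vanishes unconditionally, since all four rows are images of the three-dimensional solution space under the evaluation map $U\mapsto(U_a,U_b,U_c,U_e)$, and cofactor expansion then yields the stated identity verbatim for \emph{all} integers $a,b,c,d,e$, with no division ever performed. Your cofactor computations check out (I verified the minors attached to $W_{d+e}$, $W_{d+a}$, $W_{d+b}$, $W_{d+c}$ against the statement), and the symmetry argument for transferring the $W_{d+a}$ coefficient to the other two via the transpositions $a\leftrightarrow b$ and $a\leftrightarrow c$ is sound, since each transposition swaps one pair of rows and one pair of columns of $\Delta$, preserving it. In short: same idea, but your formulation is the rigorous, hypothesis-free version of what the paper does informally.
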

\begin{proof}
We seek to express a generalized Tribonacci number as a linear combination of three Tribonacci numbers. Let 
\begin{equation}\label{eq.nbl33qg}
W_{d+e}=f_1T_{e-a}+f_2T_{e-b}+f_3T_{e-c}\,,
\end{equation}
where $a$, $b$, $c$, $d$ and $e$ are arbitrary integers and the coefficients $f_1$, $f_2$ and $f_3$ are to be determined. Setting $e=a$, $e=b$ and $e=c$, in turn, we obtain three simultaneous equations:
\[
W_{d+a}=f_2T_{a-b}+f_3T_{a-c}\,,\quad W_{d+b}=f_1T_{b-a}+f_3T_{b-c}\,,\quad W_{d+c}=f_1T_{c-a}+f_2T_{c-b}\,.
\]
The identity of Theorem \ref{thm.ash3b4f} is established by solving these equations for $f_1$, $f_2$ and $f_3$ and substituting the solutions into identity \eqref{eq.nbl33qg}.

\end{proof}
\begin{corollary}\label{cor.e168rzg}
The following identities hold for integers $r$ and $s$:
\begin{equation}\label{eq.kr1imzv}
4W_{r + s}  = 2T_{s - 1} W_{r - 4}  + (T_{s + 4}  - 7T_s )W_r  + 4T_s W_{r + 1}\,,
\end{equation}
\begin{equation}\label{eq.yv19j1l}
4W_{r + s}  = 2T_{s - 4} W_{r - 1}  + (4T_{s + 1}  - 7T_s )W_r  + T_s W_{r + 4}\,,
\end{equation}
\begin{equation}\label{eq.ul2k2pm}
W_{r + s}  = T_{s - 1} W_{r - 1}  + (T_{s + 1}  - T_s )W_r  + T_s W_{r + 1}\,,
\end{equation}
\begin{equation}\label{eq.lh1k3sx}
4W_{r + s}  = T_{s - 4} W_{r - 4}  + (T_{s + 4}  - 11T_s )W_r  + T_s W_{r + 4}\,,
\end{equation}
\begin{equation}\label{eq.ebn9kfg}
W_{r + s}  = (T_{s + 1}  - 2T_s  - T_{s - 2} )W_{r - 1}  + (T_{s + 1}  - 2T_s )W_r  + T_s W_{r + 2}\,.
\end{equation}

\end{corollary}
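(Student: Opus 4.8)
The plan is to obtain each of the five identities as a direct specialization of Theorem \ref{thm.ash3b4f}, taking $d=r$ and $e=s$ throughout and choosing small integer values of $a$, $b$, $c$ so that the triple $\{d+a,d+b,d+c\}$ matches the subscripts of the three generalized Tribonacci numbers appearing on the right-hand side. Concretely, I would use $(a,b,c)=(-1,0,1)$ for \eqref{eq.ul2k2pm}, $(a,b,c)=(-4,0,1)$ for \eqref{eq.kr1imzv}, $(a,b,c)=(-1,0,4)$ for \eqref{eq.yv19j1l}, $(a,b,c)=(-4,0,4)$ for \eqref{eq.lh1k3sx}, and $(a,b,c)=(-1,0,2)$ for \eqref{eq.ebn9kfg}. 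In each case $d+a$, $d+b$, $d+c$ reproduce precisely the required shifts $r-4,r-1,r,r+1,r+2,r+4$ of $r$.

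With these substitutions the six Tribonacci factors $T_{a-c}$, $T_{c-b}$, $T_{b-a}$, $T_{b-c}$, $T_{c-a}$, $T_{a-b}$ become fixed small-index Tribonacci numbers, which I would evaluate from the defining recurrence and from Table \ref{tab.ms2wgo8}: $T_{-8}=4$, $T_{-5}=2$, $T_{-4}=0$, $T_{-3}=-1$, $T_{-2}=1$, $T_{-1}=0$, $T_1=1$, $T_2=1$, $T_3=2$, $T_4=4$, $T_5=7$, $T_8=44$. In every case the vanishing of one of $T_{-1}$, $T_{-4}$, $T_{-8}$ kills several terms inside the three bracketed coefficients of Theorem \ref{thm.ash3b4f}, so each coefficient collapses to at most two surviving summands, leaving expressions in $T_{s-4}$, $T_{s-2}$, $T_{s-1}$, $T_s$, $T_{s+1}$, $T_{s+4}$.

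Substituting these numerical factors, the leading coefficient $T_{a-c}T_{c-b}T_{b-a}+T_{b-c}T_{c-a}T_{a-b}$ of $W_{d+e}=W_{r+s}$ evaluates to $1$ for \eqref{eq.ul2k2pm}, to $8$ for \eqref{eq.kr1imzv} and \eqref{eq.yv19j1l}, to $64$ for \eqref{eq.lh1k3sx}, and to $-1$ for \eqref{eq.ebn9kfg}. Dividing the whole identity through by this common factor (with a sign change in the last case) produces exactly the displayed normalizations $4W_{r+s}$, $W_{r+s}$, and simultaneously scales the three bracketed coefficients into the stated forms, e.g. $8T_{s+1}-14T_s$ becoming $4T_{s+1}-7T_s$ after removing the factor $2$.

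The only delicate point is bookkeeping: one must read off the negative-index Tribonacci values correctly and track which of the three summands in each bracket survive, since a single sign error in a value such as $T_{-3}=-1$, or an overlooked common factor of $8$ or $64$, would corrupt the final coefficients. I expect no conceptual obstacle beyond this arithmetic, as each identity is a mechanical instance of Theorem \ref{thm.ash3b4f}; as a consistency check I would evaluate both sides at a few explicit values of $r$ and $s$ using Table \ref{tab.ms2wgo8}.
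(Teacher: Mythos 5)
Your proposal is correct and is essentially the paper's own proof: the paper likewise specializes Theorem \ref{thm.ash3b4f}, using $d=0$, $e=r+s$ and $a,b,c$ equal to shifts of $r$ (e.g.\ $a=r-4$, $b=r$, $c=r+1$ for \eqref{eq.kr1imzv}), which by the shift-invariance of the theorem's differences yields the identical instantiation as your choice $d=r$, $e=s$ with small constant $a,b,c$. Your substitutions, small-index Tribonacci values, leading coefficients $1,8,8,64,-1$, and the final rescalings all check out, so the only difference is that you spell out the normalization step the paper leaves implicit.
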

\begin{proof}
See Table \ref{tabl.ijmi0p1}.
\begin{table}[h!]
\begin{tabular}{lcccccllllllll}
\multicolumn{1}{c}{Identity} & $a$ & $b$ & $c$ & $d$ & $e$ & \multicolumn{1}{c}{} & \multicolumn{1}{c}{} & \multicolumn{1}{c}{Identity} & \multicolumn{1}{c}{$a$} & \multicolumn{1}{c}{$b$} & \multicolumn{1}{c}{$c$} & \multicolumn{1}{c}{$d$} & \multicolumn{1}{c}{$e$} \\ 
\cline{1-6}\cline{9-14}
\multicolumn{1}{c}{\eqref{eq.kr1imzv}} & $r-4$ & $r$ & $r+1$ & $0$ & $r+s$ & \multicolumn{1}{c}{} & \multicolumn{1}{c}{} & \multicolumn{1}{c}{\eqref{eq.lh1k3sx}} & \multicolumn{1}{c}{$r+4$} & \multicolumn{1}{c}{$r$} & \multicolumn{1}{c}{$r-4$} & \multicolumn{1}{c}{$0$} & \multicolumn{1}{c}{$r+s$} \\ 
\cline{1-6}\cline{9-14}
\multicolumn{1}{c}{\eqref{eq.yv19j1l}} & $r+4$ & $r$ & $r-1$ & $0$ & $r+s$ & \multicolumn{1}{c}{} & \multicolumn{1}{c}{} & \multicolumn{1}{c}{\eqref{eq.ebn9kfg}} & \multicolumn{1}{c}{$r+2$} & \multicolumn{1}{c}{$r-1$} & \multicolumn{1}{c}{$r$} & \multicolumn{1}{c}{$0$} & \multicolumn{1}{c}{$r+s$} \\ 
\cline{1-6}\cline{9-14}
\multicolumn{1}{c}{\eqref{eq.ul2k2pm}} & $r-1$ & $r$ & $r+1$ & $0$ & $r+s$ & \multicolumn{1}{c}{} & \multicolumn{1}{c}{} & \multicolumn{1}{c}{} & \multicolumn{1}{c}{} & \multicolumn{1}{c}{} & \multicolumn{1}{c}{} & \multicolumn{1}{c}{} & \multicolumn{1}{c}{} \\ 
\cline{1-6}
\end{tabular}
\caption{Appropriate substitutions in the identity of Theorem \ref{thm.ash3b4f} to obtain \mbox{identities \eqref{eq.kr1imzv} -- \eqref{eq.ebn9kfg}} of Corollary \ref{cor.e168rzg}.}
\label{tabl.ijmi0p1}
\end{table}

\end{proof}
Note that identity \eqref{eq.ul2k2pm} can be written in the familiar form
\begin{equation}\label{eq.d86jm8d}
W_{r + s}  = T_{s - 1} W_{r - 1}  + (T_{s - 1}  + T_{s-2} )W_r  + T_s W_{r + 1}\,.
\end{equation}
Evaluating identity \eqref{eq.kr1imzv} at $s=-3$, $s=-16$ and at $s=-17$, in turn, we find the following three term recurrence relations for the generalized Tribonacci numbers:
\begin{equation}\label{eq.df8s42u}
W_{r-3}=2W_r-W_{r+1}\,,
\end{equation}
\begin{equation}\label{eq.xnvfyzv}
W_{r-16}=-103W_r+56W_{r+1}
\end{equation}
and
\begin{equation}\label{eq.zj12pm9}
2W_{r-17}=9W_r-103W_{r-4}\,.
\end{equation}
Evaluating identity \eqref{eq.ebn9kfg} at $s=-14$ produces yet another three-term recurrence relation for the generalized Tribonacci numbers:
\begin{equation}\label{eq.kefb5d6}
W_{r-14}=9W_{r+2}-56W_{r-1}\,.
\end{equation}
Note that by interchanging $r$ and $s$ in each case and making use of the defining recurrence relation for $W$ and $T$, the identities \eqref{eq.kr1imzv} -- \eqref{eq.ebn9kfg} can also be written 
\begin{equation}
4W_{r + s}  = 2W_{s - 1} T_{r - 4}  + (W_{s + 4}  - 7W_s )T_r  + 4W_s T_{r + 1}\,,
\end{equation}
\begin{equation}
4W_{r + s}  = 2W_{s - 4} T_{r - 1}  + (4W_{s + 1}  - 7W_s )T_r  + W_s T_{r + 4}\,,
\end{equation}
\begin{equation}\label{eq.j6edvhd}
W_{r + s}  = W_{s - 1} T_{r - 1}  + (W_{s + 1}  - W_s )T_r  + W_s T_{r + 1}\,,
\end{equation}
\begin{equation}
4W_{r + s}  = W_{s - 4} T_{r - 4}  + (W_{s + 4}  - 11W_s )T_r  + W_s T_{r + 4}\,,
\end{equation}
\begin{equation}
W_{r + s}  = (W_{s + 1}  - 2W_s  - W_{s - 2} )T_{r - 1}  + (W_{s + 1}  - 2W_s )T_r  + W_s T_{r + 2}\,.
\end{equation}
Evaluating identity \eqref{eq.j6edvhd} at $s=-2$ with $W=K$ gives
\begin{equation}\label{eq.w931ajy}
K_{r-2}=5T_{r-1}-T_{r+1}\,,
\end{equation}
a three-term relation connecting the Tribonacci numbers and the Tribonacci-Lucas numbers.
\begin{theorem}\label{thm.wv4i8l5}
The following identity holds for integers $a$, $b$, $c$, $d$ and $e$:
\[
\begin{split}
\{K_{b-a-1}K_{a-b-1}+&K_{b-a-1}K_{c-b-1}K_{a-c-1}+K_{c-a-1}K_{b-c-1}K_{a-b-1}\\
&\qquad\qquad+K_{c-a-1}K_{a-c-1}+K_{c-b-1}K_{b-c-1}-1\}W_{d+e}\\
&=\{(1-K_{c-b-1}K_{b-c-1})K_{e-a}+(K_{b-a-1}+K_{c-a-1}K_{b-c-1})K_{e-b}\\
&\qquad\qquad+(K_{c-a-1}+K_{b-a-1}K_{c-b-1})K_{e-c}\}W_{d+a-1}\\
&\quad+\{(1-K_{c-a-1}K_{a-c-1})K_{e-b}+(K_{a-b-1}+K_{c-b-1}K_{a-c-1})K_{e-a}\\
&\qquad\qquad+(K_{c-b-1}+K_{c-a-1}K_{a-b-1})K_{e-c}\}W_{d+b-1}\\
&\qquad+\{(1-K_{b-a-1}K_{a-b-1})K_{e-c}+(K_{b-c-1}+K_{b-a-1}K_{a-c-1})K_{e-b}\\
&\qquad\qquad+(K_{a-c-1}+K_{b-c-1}K_{a-b-1})K_{e-a}\}W_{d+c-1}\,.
\end{split}
\]
\end{theorem}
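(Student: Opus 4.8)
The plan is to follow the template of the proof of Theorem~\ref{thm.ash3b4f}, but to express the generalized Tribonacci number through the Tribonacci--Lucas sequence rather than the Tribonacci sequence. Since $(W_r)$ and $(K_r)$ satisfy the same third-order recurrence, I would begin from the ansatz
\[
W_{d+e}=g_1 K_{e-a}+g_2 K_{e-b}+g_3 K_{e-c}\,,
\]
where $g_1,g_2,g_3$ are to be determined as functions of $a,b,c,d$ but not of $e$. The new ingredient, replacing the vanishing value $T_0=0$ that was exploited in Theorem~\ref{thm.ash3b4f}, is the value $K_{-1}=-1$ recorded in Table~\ref{tab.ms2wgo8}.

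Next I would specialize $e$ to $a-1$, $b-1$ and $c-1$ in turn. Because $K_{-1}=-1$, each substitution collapses one diagonal coefficient to $-1$ and lowers the left-hand subscript by one, producing the linear system
\[
\begin{aligned}
W_{d+a-1}&=-g_1+K_{a-b-1}\,g_2+K_{a-c-1}\,g_3\,,\\
W_{d+b-1}&=K_{b-a-1}\,g_1-g_2+K_{b-c-1}\,g_3\,,\\
W_{d+c-1}&=K_{c-a-1}\,g_1+K_{c-b-1}\,g_2-g_3\,.
\end{aligned}
\]
The coefficient matrix of this system carries $-1$ along its diagonal and the entries $K_{x-y-1}$ off it; a direct cofactor expansion shows that its determinant equals exactly the bracketed factor multiplying $W_{d+e}$ in the statement. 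I would then solve for $g_1,g_2,g_3$ by Cramer's rule, substitute back into the ansatz, and clear the determinant to the left-hand side. The bracketed coefficients of $W_{d+a-1}$, $W_{d+b-1}$ and $W_{d+c-1}$ then emerge as the three $2\times2$ cofactors of the coefficient matrix paired, respectively, with $K_{e-a}$, $K_{e-b}$ and $K_{e-c}$.

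The principal difficulty is purely one of bookkeeping: one must expand a $3\times3$ determinant together with its nine cofactors and track every sign, so that (for example) the coefficient of $W_{d+a-1}$ is delivered as $(1-K_{c-b-1}K_{b-c-1})K_{e-a}$ plus the two analogous terms in $K_{e-b}$ and $K_{e-c}$, rather than a sign-flipped variant. A secondary point, exactly as in Theorem~\ref{thm.ash3b4f}, is that the ansatz tacitly assumes that $K_{e-a},K_{e-b},K_{e-c}$ span the three-dimensional solution space of the recurrence---equivalently that the determinant above is nonzero---but the final identity is a polynomial relation among the matrix entries and therefore holds independently of this nondegeneracy.
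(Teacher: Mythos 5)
Your proposal is correct and is essentially the paper's own proof: the same ansatz $W_{d+e}=f_1K_{e-a}+f_2K_{e-b}+f_3K_{e-c}$, the same specializations $e=a-1,\,b-1,\,c-1$ exploiting $K_{-1}=-1$, and the same resulting linear system solved for the coefficients (the paper leaves the Cramer's-rule bookkeeping implicit, just as you describe it). Your closing remark on the degenerate case where the determinant vanishes goes beyond what the paper addresses and is a welcome addition, though to make it airtight you would argue not via ``polynomial identity'' in the entries but, e.g., by noting that the four sequences $x\mapsto W_{d+x},\,K_{x-a},\,K_{x-b},\,K_{x-c}$ all satisfy the same third-order recurrence, so the corresponding $4\times4$ determinant vanishes identically.
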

\begin{proof}
We wish to express a generalized Tribonacci number as a linear combination of three Tribonacci-Lucas numbers. Let 
\begin{equation}\label{eq.a9nh9kp}
W_{d+e}=f_1K_{e-a}+f_2K_{e-b}+f_3K_{e-c}\,,
\end{equation}
where $a$, $b$, $c$, $d$ and $e$ are arbitrary integers and the coefficients $f_1$, $f_2$ and $f_3$ are to be determined. Setting $e=a-1$, $e=b-1$ and $e=c-1$, in turn, we obtain three simultaneous equations:
\[
\begin{split}
&W_{d+a-1}=-f_1+f_2K_{a-b-1}+f_3K_{a-c-1}\,,\quad W_{d+b-1}=f_1K_{b-a-1}-f_2+f_3K_{b-c-1}\,,\\
&\qquad W_{d+c-1}=f_1K_{c-a-1}+f_2K_{c-b-1}-f_3\,.
\end{split}
\]
The identity of Theorem \ref{thm.wv4i8l5} is obtained by solving these equations for $f_1$, $f_2$ and $f_3$ and substituting the solutions into identity \eqref{eq.a9nh9kp}.

\end{proof}
\begin{corollary}
The following identities hold for integers $r$ and $s$:
\begin{equation}\label{eq.cfph541}
\begin{split}
44W_{r+s} &= (9K_{s+3}-K_{s+5}+2K_{s+1})W_{r-6}\\
&\quad+(9K_{s+1}+K_{s+5}+2K_{s+3})W_{r-4}\\
&\qquad+(K_{s+3}+6K_{s+5}-K_{s+1})W_{r-2}\,,
\end{split}
\end{equation}
\begin{equation}
\begin{split}
88W_{r+s} &= (8K_{s+3}-30K_{s-1}-2K_{s-2})W_{r}\\
&\quad+(K_{s+3}-K_{s-1}+8K_{s-2})W_{r-4}\\
&\qquad+(3K_{s+3}+19K_{s-1}+2K_{s-2})W_{r+1}\,,
\end{split}
\end{equation}
\begin{equation}
\begin{split}
22W_{r+s} &= (5K_{s-2}+K_{s-1}+2K_{s})W_{r-1}\\
&\quad+(K_{s-2}-2K_{s-1}+7K_{s})W_{r}\\
&\qquad+(2K_{s-2}+7K_{s-1}+3K_{s})W_{r+1}\,,
\end{split}
\end{equation}
\begin{equation}
\begin{split}
5060W_{r+s} &= (-264K_{s-3}+K_{s+9}-1079K_{s-1})W_{r-10}\\
&\quad+(-4279K_{s-3}+21K_{s+9}-21394K_{s-1})W_{r}\\
&\qquad+(6325K_{s-1}+1265K_{s-3})W_{r+2}
\end{split}
\end{equation}
and
\begin{equation}\label{eq.exe6rnf}
\begin{split}
5060W_{r+s} &= (-264K_{s-11}+1265K_{s+1}-4279K_{s-1})W_{r-2}\\
&\quad+(-1079K_{s-11}+6325K_{s+1}-21394K_{s-1})W_{r}\\
&\qquad+(21K_{s-1}+K_{s-11})W_{r+10}\,.
\end{split}
\end{equation}

\end{corollary}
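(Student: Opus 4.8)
The plan is to reproduce verbatim the strategy used for Corollary~\ref{cor.e168rzg}: each of the five identities is a specialization of the master identity of Theorem~\ref{thm.wv4i8l5} obtained by a single choice of the free integers $a,b,c,d,e$, followed by division through by a common integer factor. Throughout I would take $d=0$ and $e=r+s$, so that the left-hand side reads $W_{r+s}$. The remaining three integers are then fixed by matching subscripts: the generalized Tribonacci arguments $d+a-1,\,d+b-1,\,d+c-1$ on the right must equal the displayed $W$-subscripts, while the differences $e-a,\,e-b,\,e-c$ must equal the displayed Tribonacci--Lucas subscripts. Solving these elementary conditions gives the assignments

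\begin{center}
\begin{tabular}{lcccccc}
\hline
Identity & $a$ & $b$ & $c$ & $d$ & $e$ & Factor \\
\hline
\eqref{eq.cfph541} & $r-5$ & $r-3$ & $r-1$ & $0$ & $r+s$ & $4$ \\
$88$-identity & $r+1$ & $r-3$ & $r+2$ & $0$ & $r+s$ & $4$ \\
$22$-identity & $r$ & $r+1$ & $r+2$ & $0$ & $r+s$ & $2$ \\
first $5060$-identity & $r-9$ & $r+1$ & $r+3$ & $0$ & $r+s$ & $-4$ \\
\eqref{eq.exe6rnf} & $r-1$ & $r+1$ & $r+11$ & $0$ & $r+s$ & $-4$ \\
\hline
\end{tabular}
\end{center}

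First I would insert each row into Theorem~\ref{thm.wv4i8l5}. Once $a,b,c$ are fixed, every pairwise difference $b-a,\,c-a,\,c-b$ (and their negatives) is a constant, so each of $K_{b-a-1},K_{a-b-1},K_{c-b-1},K_{a-c-1},K_{c-a-1},K_{b-c-1}$ becomes a specific Tribonacci--Lucas number read from Table~\ref{tab.ms2wgo8}. For \eqref{eq.cfph541}, for example, one has $b-a=2$, $c-a=4$, $c-b=2$, giving $K_{1}=1$, $K_{-3}=5$, $K_{-5}=-1$, $K_{3}=7$; substituting these collapses the coefficient of $W_{d+e}$ to $176$ and turns each bracketed coefficient of $W_{r-6},W_{r-4},W_{r-2}$ into a fixed integer combination of $K_{e-a}=K_{s+5}$, $K_{e-b}=K_{s+3}$, $K_{e-c}=K_{s+1}$ (e.g.\ the $W_{r-6}$ coefficient becomes $-4K_{s+5}+36K_{s+3}+8K_{s+1}$). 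Dividing the whole relation by $4$ then reproduces \eqref{eq.cfph541} exactly. The same computation with the remaining four rows yields the other four identities.

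The hard part will be purely arithmetic rather than conceptual. The coefficients of Theorem~\ref{thm.wv4i8l5} contain products of three Tribonacci--Lucas numbers, and for the two $5060$-identities the spreads $c-a$ reach $12$, so the relevant values range over indices such as $K_{-13}=-105$, $K_{-11}=21$, $K_{9}=241$, $K_{11}=815$. The individual triple products are then as large as $85575$, but they cancel massively: for the first $5060$-identity the coefficient of $W_{d+e}$ comes out to $-20240=-4\cdot 5060$, so after dividing by $-4$ one recovers the stated leading coefficient $5060$ (this sign is exactly what fixes the common factor as $-4$ rather than $+4$). The delicate points to verify are therefore (i) that these large cancellations occur as claimed, (ii) that each of the three bracketed coefficients reduces, after division by the common factor, to precisely the two- or three-term combination of $K_{e-a},K_{e-b},K_{e-c}$ displayed, and (iii) that one and the same integer factor simultaneously divides the coefficient of $W_{d+e}$ and all three right-hand coefficients. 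No idea beyond Theorem~\ref{thm.wv4i8l5} and the table of Tribonacci--Lucas values is required.
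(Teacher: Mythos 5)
Your proposal is correct and is exactly the paper's (implicit) method: the paper states this corollary without a written proof, but—mirroring the proof of Corollary \ref{cor.e168rzg} via Table \ref{tabl.ijmi0p1}—the intended argument is precisely specialization of Theorem \ref{thm.wv4i8l5}, and your substitution table ($d=0$, $e=r+s$, with $(a,b,c)$ equal to $(r-5,r-3,r-1)$, $(r+1,r-3,r+2)$, $(r,r+1,r+2)$, $(r-9,r+1,r+3)$, $(r-1,r+1,r+11)$ and common factors $4,4,2,-4,-4$) checks out, including the leading-coefficient computations $176$, $352$, $44$, $-20240$, $-20240$.
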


Identities \eqref{eq.cfph541} -- \eqref{eq.exe6rnf} can also be written as follows.
\begin{equation}
\begin{split}
44W_{r+s} &= (9W_{s+3}-W_{s+5}+2W_{s+1})K_{r-6}\\
&\quad+(9W_{s+1}+W_{s+5}+2W_{s+3})K_{r-4}\\
&\qquad+(W_{s+3}+6W_{s+5}-W_{s+1})K_{r-2}\,,
\end{split}
\end{equation}
\begin{equation}
\begin{split}
88W_{r+s} &= (8W_{s+3}-30W_{s-1}-2W_{s-2})K_{r}\\
&\quad+(W_{s+3}-W_{s-1}+8W_{s-2})K_{r-4}\\
&\qquad+(3W_{s+3}+19W_{s-1}+2W_{s-2})K_{r+1}\,,
\end{split}
\end{equation}
\begin{equation}
\begin{split}
22W_{r+s} &= (5W_{s-2}+W_{s-1}+2W_{s})K_{r-1}\\
&\quad+(W_{s-2}-2W_{s-1}+7W_{s})K_{r}\\
&\qquad+(2W_{s-2}+7W_{s-1}+3W_{s})K_{r+1}\,,
\end{split}
\end{equation}
\begin{equation}
\begin{split}
5060W_{r+s} &= (-264W_{s-3}+W_{s+9}-1079W_{s-1})K_{r-10}\\
&\quad+(-4279W_{s-3}+21W_{s+9}-21394W_{s-1})K_{r}\\
&\qquad+(6325W_{s-1}+1265W_{s-3})K_{r+2}
\end{split}
\end{equation}
and
\begin{equation}
\begin{split}
5060W_{r+s} &= (-264W_{s-11}+1265W_{s+1}-4279W_{s-1})K_{r-2}\\
&\quad+(-1079W_{s-11}+6325W_{s+1}-21394W_{s-1})K_{r}\\
&\qquad+(21W_{s-1}+W_{s-11})K_{r+10}
\end{split}
\end{equation}

\section{Quadratic relations}
Our goal in this section is to derive expressions involving only pure squares of generalized Tribonacci numbers. To achieve this we must be able to express the anticipated cross-terms such as $W_{r - 1} W_r$ and $W_{r - 1} W_{r-4}$ as squares of generalized Tribonacci numbers.

\medskip

Rearranging identity~\eqref{eq.ccgxij2} and squaring, we have
\begin{equation}\label{eq.pbljs54}
4W_{r - 1} W_r  = 4W_{r - 1}^2  - W_{r - 4}^2  + W_r^2\,,
\end{equation}
\begin{equation}\label{eq.a41lrc2}
4W_{r - 1} W_{r - 4}  = 4W_{r - 1}^2  + W_{r - 4}^2  - W_r^2
\end{equation}
and
\begin{equation}\label{eq.qjbtgy8}
2W_r W_{r - 4}  = 4W_{r - 1}^2  - W_{r - 4}^2  - W_r^2\,.
\end{equation}
Rearranging identity~\eqref{eq.ccgxij2} and multiplying through by $4W_{r-3}$ to obtain
\begin{equation}
8W_{r - 1} W_{r - 3}  = 4W_r W_{r - 3}  + 4W_{r - 4} W_{r - 3}\,,
\end{equation}
and using identities~\eqref{eq.pbljs54} and~\eqref{eq.a41lrc2} to resolve the right hand side gives
\begin{equation}\label{eq.y9lf0gg}
8W_{r - 1} W_{r - 3}  = 4W_r^2  + 2W_{r - 3}^2  - W_{r + 1}^2  + 4W_{r - 4}^2  - W_{r - 7}^2\,.
\end{equation}
Multiplying through identity~\eqref{eq.ccgxij2} by $4W_{r-5}$ to obtain
\begin{equation}
4W_{r} W_{r - 5}  = 8W_{r-1} W_{r - 5}  - 4W_{r - 4} W_{r - 5}\,,
\end{equation}
which, with the use of \eqref{eq.pbljs54} and \eqref{eq.qjbtgy8}, translates to
\begin{equation}\label{eq.gsxw34t}
4W_{r}W_{r-5} = 16W_{r-2}^2-8W_{r-5}^2-4W_{r-1}^2+W_{r-8}^2-W_{r-4}^2\,.
\end{equation}
Rearranging identity~\eqref{eq.ccgxij2}, shifting index $r$ and multiplying through by $2W_{r}$ gives
\begin{equation}
2W_{r} W_{r - 8}  = 4W_r W_{r - 5}  - 2W_{r} W_{r - 4}\,,
\end{equation}
from which, using \eqref{eq.qjbtgy8} and \eqref{eq.gsxw34t}, we get
\begin{equation}\label{eq.q2bp8ae}
2W_{r}W_{r-8} = 16W_{r-2}^2-8W_{r-5}^2-8W_{r-1}^2+W_{r-8}^2+W_{r}^2\,.
\end{equation}

Re-arranging identity \eqref{eq.xnvfyzv} and squaring, we have
\begin{equation}\label{eq.m5pbty8}
112W_{r-17}W_{r} = W_{r-17}^2+3136W_{r}^2-10609W_{r-1}^2\,
\end{equation}
\begin{equation}\label{eq.wos5d03}
11536W_{r-1}W_{r} = -W_{r-17}^2+3136W_{r}^2+10609W_{r-1}^2\,,
\end{equation}
and
\begin{equation}\label{eq.rxocfxa}
206W_{r-17}W_{r-1} = -W_{r-17}^2-10609W_{r-1}^2+3136W_{r}^2\,.
\end{equation}
Rearranging and squaring identity \eqref{eq.zj12pm9}, we have
\begin{equation}\label{eq.cm0gepi}
36W_{r-17}W_{r} = 4W_{r-17}^2+81W_{r}^2-10609W_{r-4}^2\,,
\end{equation}
\begin{equation}\label{eq.ebfmewt}
1854W_{r}W_{r-4} = 81W_{r}^2+10609W_{r-4}^2-4W_{r-17}^2
\end{equation}
and
\begin{equation}\label{eq.q7trtk8}
412W_{r-17}W_{r-4} = -4W_{r-17}^2-10609W_{r-4}^2+81W_{r}^2\,.
\end{equation}
Finally, squaring and re-arranging identity \eqref{eq.kefb5d6} produces
\begin{equation}\label{eq.upegzaa}
18W_{r-17}W_{r-1} = W_{r-17}^2+81W_{r-1}^2-3136W_{r-4}^2\,,
\end{equation}
\begin{equation}
1008W_{r-1}W_{r-4} = 81W_{r-1}^2+3136W_{r-4}^2-W_{r-17}^2
\end{equation}
and
\begin{equation}\label{eq.abtrmog}
112W_{r-17}W_{r-4} = -W_{r-17}^2-3136W_{r-4}^2+81W_{r-1}^2\,.
\end{equation}
\begin{theorem}\label{thm.ldkszku}
The following identities hold for $r$ an integer:
\begin{equation}\label{eq.p1qgtxd}
252W_r^2  - 927W_{r - 1}^2 + 2884W_{r - 4}^2 - W_{r - 17}^2= 0\,,
\end{equation}
\begin{equation}\label{eq.jesyg5e}
W_r^2  - 2W{}_{r - 1}^2  - 3W_{r - 2}^2  - 6W_{r - 3}^2  + W_{r - 4}^2  + W_{r - 6}^2  = 0\,.
\end{equation}
\end{theorem}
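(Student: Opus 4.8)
Both parts of Theorem~\ref{thm.ldkszku} are \emph{pure-square} identities, and my plan is to obtain each of them by computing a single product $W_iW_j$ in two genuinely different ways and equating the results; the cross term then cancels and only squares survive. All of the raw material is already assembled in equations \eqref{eq.pbljs54}--\eqref{eq.abtrmog}, so the work is one of selection and arithmetic rather than of generating new relations.

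For \eqref{eq.p1qgtxd} I would exploit the fact that $W_{r-17}$ is pinned down by two \emph{distinct} three-term recurrences: shifting \eqref{eq.xnvfyzv} gives $W_{r-17}=56W_r-103W_{r-1}$, while \eqref{eq.zj12pm9} gives $2W_{r-17}=9W_r-103W_{r-4}$. Squaring these (already recorded) produces the two expressions \eqref{eq.m5pbty8} and \eqref{eq.cm0gepi} for the \emph{same} product $W_{r-17}W_r$. Clearing denominators -- multiply \eqref{eq.m5pbty8} by $9$ and \eqref{eq.cm0gepi} by $28$, so that each reads $1008\,W_{r-17}W_r=\cdots$ -- and subtracting eliminates the product, leaving $25956W_r^2-95481W_{r-1}^2+297052W_{r-4}^2-103W_{r-17}^2=0$, which is precisely $103$ times \eqref{eq.p1qgtxd}. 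Dividing by $103$ finishes it. Equivalently one could equate the two expressions \eqref{eq.rxocfxa} and \eqref{eq.upegzaa} for $W_{r-17}W_{r-1}$, or \eqref{eq.q7trtk8} and \eqref{eq.abtrmog} for $W_{r-17}W_{r-4}$; each collapses to the same relation up to a constant factor, which serves as a useful internal consistency check.

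For \eqref{eq.jesyg5e} the cleanest route starts from the defining recurrence written as $W_{r-2}+W_{r-3}=W_r-W_{r-1}$. Squaring both sides gives $W_{r-2}^2+2W_{r-2}W_{r-3}+W_{r-3}^2=W_r^2-2W_rW_{r-1}+W_{r-1}^2$, hence $2W_{r-2}W_{r-3}+2W_rW_{r-1}=W_r^2+W_{r-1}^2-W_{r-2}^2-W_{r-3}^2$. Now the two remaining products are both \emph{adjacent} products, and each is converted to squares by the fundamental relation \eqref{eq.pbljs54}: directly for $4W_rW_{r-1}$, and with the index shift $r\mapsto r-2$ for $4W_{r-3}W_{r-2}=4W_{r-3}^2-W_{r-6}^2+W_{r-2}^2$ -- this shift is exactly where the term $W_{r-6}^2$ enters. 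Substituting both and clearing the factor $\tfrac12$ reduces the equation to $W_r^2-2W_{r-1}^2-3W_{r-2}^2-6W_{r-3}^2+W_{r-4}^2+W_{r-6}^2=0$, as required.

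The only genuine decision in each case is \emph{which} product to evaluate two ways: for \eqref{eq.p1qgtxd} it is forced by the two recurrences satisfied by $W_{r-17}$, and for \eqref{eq.jesyg5e} the grouping $W_{r-2}+W_{r-3}=W_r-W_{r-1}$ is what makes both resulting products adjacent, so that \eqref{eq.pbljs54} disposes of them with no stray indices surviving; everything else is routine. As an independent check on \eqref{eq.jesyg5e}, one can verify via the Binet form $W_r=A\alpha^r+B\beta^r+C\gamma^r$, with $\alpha,\beta,\gamma$ the roots of $x^3-x^2-x-1$ and $\alpha\beta\gamma=1$, that the identity is equivalent to the polynomial factorization $t^6+t^4-6t^3-3t^2-2t+1=(t^3-t^2-t-1)(t^3+t^2+3t-1)$: the first factor is the Tribonacci characteristic polynomial (handling the terms $A^2\alpha^{2r}$ and, through $\alpha\beta\gamma=1$, the cross terms), and the second has $\alpha^{-2},\beta^{-2},\gamma^{-2}$ as its roots. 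This confirms that \eqref{eq.jesyg5e} holds for every choice of $W_0,W_1,W_2$.
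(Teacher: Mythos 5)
Your proposal is correct and uses essentially the paper's own method: for \eqref{eq.jesyg5e} your argument (square $W_r-W_{r-1}=W_{r-2}+W_{r-3}$ and resolve both cross products $W_rW_{r-1}$ and $W_{r-2}W_{r-3}$ via \eqref{eq.pbljs54}, the latter after shifting $r\mapsto r-2$) is exactly the proof given in the paper. For \eqref{eq.p1qgtxd} the paper instead eliminates $W_{r-1}W_r$ between \eqref{eq.pbljs54} and \eqref{eq.wos5d03}, whereas you eliminate $W_{r-17}W_r$ between \eqref{eq.m5pbty8} and \eqref{eq.cm0gepi}; this is the same cross-term-elimination technique applied to a different (equally valid) pair of the paper's quadratic identities, and your arithmetic checks out, yielding $103$ times \eqref{eq.p1qgtxd}.
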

\begin{proof}
Eliminating $W_{r-1}W_r$ between identities \eqref{eq.pbljs54} and \eqref{eq.wos5d03} proves identity \eqref{eq.p1qgtxd}.
To prove identity~\eqref{eq.jesyg5e}, write $W_r-W_{r-1}=W_{r-2}+W_{r-3}$, square both sides and use the identity~\eqref{eq.pbljs54} to resolve the cross-products $W_r W_{r - 1} $ and $W_{r-2} W_{r - 3} $.

\end{proof}
Substituting for $W_{r-17}^2$ from the identity of Theorem \ref{thm.ldkszku} into identities \eqref{eq.m5pbty8}, \eqref{eq.rxocfxa} and \eqref{eq.abtrmog}, we have the following simpler versions of these identities:
\begin{equation}
4W_{r-17}W_{r} = -412W_{r-1}^2+103W_{r-4}^2+121W_{r}^2\,,
\end{equation}
\begin{equation}
W_{r-17}W_{r-1} = -47W_{r-1}^2-14W_{r-4}^2+14W_{r}^2
\end{equation}
and
\begin{equation}
4W_{r-17}W_{r-4} = 36W_{r-1}^2-215W_{r-4}^2-9W_{r}^2\,.
\end{equation}
Next we give how to express the square of a Tribonacci-Lucas number in terms of squares of Tribonacci numbers.
\begin{theorem}
The following identity holds for any integer $r$:
\begin{equation}
4K_{r}^2 =5T_{r+5}^2-20T_{r+4}^2+4T_{r+3}^2+90T_{r+1}^2-20T_{r}^2+5T_{r-3}^2\,.
\end{equation}
\end{theorem}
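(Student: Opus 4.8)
The plan is to reduce the whole statement to the linear bridge \eqref{eq.w931ajy} between the two sequences and then clean up the single cross-term that this produces by invoking the quadratic identity \eqref{eq.y9lf0gg}. First I would shift \eqref{eq.w931ajy}, replacing $r$ by $r+2$, to obtain the handier form
\begin{equation*}
K_r = 5T_{r+1} - T_{r+3}\,.
\end{equation*}
Squaring this and multiplying through by $4$ gives
\begin{equation*}
4K_r^2 = 100\,T_{r+1}^2 - 40\,T_{r+1}T_{r+3} + 4\,T_{r+3}^2\,,
\end{equation*}
so the entire problem collapses to expressing the lone product $T_{r+1}T_{r+3}$ as a combination of pure squares of Tribonacci numbers.

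To handle that cross-term I would use identity \eqref{eq.y9lf0gg}, $8W_{r-1}W_{r-3} = 4W_r^2 + 2W_{r-3}^2 - W_{r+1}^2 + 4W_{r-4}^2 - W_{r-7}^2$, which was derived for an arbitrary generalized Tribonacci sequence and therefore applies verbatim with $W=T$. The product there, $T_{r-1}T_{r-3}$, has indices differing by $2$, exactly like $T_{r+1}T_{r+3}$; matching them is just a relabelling. Substituting $r\mapsto r+4$ in \eqref{eq.y9lf0gg} yields
\begin{equation*}
8\,T_{r+1}T_{r+3} = 4\,T_{r+4}^2 + 2\,T_{r+1}^2 - T_{r+5}^2 + 4\,T_r^2 - T_{r-3}^2\,.
\end{equation*}
Multiplying this by $5$ produces precisely $40\,T_{r+1}T_{r+3}$, which is the quantity to be eliminated above.

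Inserting this expression for $40\,T_{r+1}T_{r+3}$ into the squared relation and collecting like terms then gives the claimed identity; the coefficient of $T_{r+1}^2$ becomes $100-10=90$, while $5T_{r+5}^2$, $-20T_{r+4}^2$, $-20T_r^2$ and $5T_{r-3}^2$ appear from the substitution and $4T_{r+3}^2$ survives unchanged. I expect the only real obstacle to be the index bookkeeping in aligning the gap-$2$ product of \eqref{eq.y9lf0gg} with $T_{r+1}T_{r+3}$; everything after that is linear combination of squares with no further structural input. As a safeguard I would spot-check the final coefficients at a convenient value such as $r=3$, where $4K_3^2 = 196$ and the right-hand side evaluates, using the tabulated values $T_0=0,\,T_3=2,\,T_4=4,\,T_6=13,\,T_7=24,\,T_8=44$, to $9680-11520+676+1440-80+0 = 196$, confirming the result.
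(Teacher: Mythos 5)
Your proposal is correct and is essentially the paper's own proof: the paper likewise squares identity \eqref{eq.w931ajy} and invokes \eqref{eq.y9lf0gg} (with $W=T$) to eliminate the single cross-term, differing from your write-up only in that you perform the index shift $r\mapsto r+2$ at the start rather than at the end. Your detailed bookkeeping and the numerical check at $r=3$ simply fill in the steps the paper leaves implicit.
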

\begin{proof}
Square identity \eqref{eq.w931ajy} and use identity \eqref{eq.y9lf0gg} to eliminate the cross term.
\end{proof}
\begin{theorem}
The following identities hold for $r$ and $s$ integers:
\begin{equation}
\begin{split}
16W_{r+s}^2 &= -(T_{s}+T_{s+4})(-T_{s+4}+2T_{s-1}+7T_{s})W_{r}^2+4T_{s-1}T_{s}W_{r-7}^2\\
&\quad+2T_{s-1}(-T_{s+4}-9T_{s}+2T_{s-1})W_{r-4}^2-2T_{s}(T_{s+4}-7T_{s}+2T_{s-1})W_{r-3}^2\\
&\qquad+8T_{s-1}(T_{s}+T_{s+4})W_{r-1}^2+2T_{s}(T_{s}+T_{s+4})W_{r+1}^2\,,
\end{split}
\end{equation}
\begin{equation}
\begin{split}
16W_{r+s}^2 &= 4(2T_{s}-T_{s+1})(7T_{s}-T_{s-4}-4T_{s+1})W_{r}^2+4T_{s-4}(2T_{s}-T_{s+1})W_{r-4}^2\\
&\quad-4T_{s-4}(-T_{s-4}-4T_{s+1}+9T_{s})W_{r-1}^2+16T_{s-4}T_{s}W_{r+2}^2\\
&\qquad-4T_{s}(7T_{s}+T_{s-4}-4T_{s+1})W_{r+3}^2+4T_{s}(2T_{s}-T_{s+1})W_{r+4}^2\,,
\end{split}
\end{equation}
\begin{equation}
\begin{split}
4W_{r+s}^2 &= -2(-T_{s+1}+T_{s})(2T_{s+2}-T_{s-1})W_{r}^2-T_{s-1}T_{s}W_{r-5}^2\\
&\quad+2T_{s-1}(-T_{s+1}+T_{s})W_{r-4}^2+2T_{s}(-T_{s+1}+T_{s})W_{r-3}^2\\
&\qquad+4T_{s-1}T_{s}W_{r-2}^2+2T_{s-1}(2T_{s-1}+4T_{s+1}-3T_{s})W_{r-1}^2\\
&\quad\qquad+2T_{s}(T_{s}+T_{s+1})W_{r+1}^2-T_{s-1}T_{s}W_{r+3}^2+4T_{s-1}T_{s}W_{r+2}^2\,.
\end{split}
\end{equation}

\end{theorem}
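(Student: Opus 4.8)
The plan is to treat all three identities by a single, uniform mechanism: start from one of the \emph{linear} addition formulas already proved in Corollary~\ref{cor.e168rzg}, which each express $W_{r+s}$ as a combination of exactly three shifted generalized Tribonacci numbers with Tribonacci coefficients, and then square. Squaring produces three pure-square terms (immediately of the desired type) together with three cross-product terms $W_iW_j$. The entire content of the theorem is then the conversion of these three cross-products into pure squares, and for that I would invoke the quadratic relations \eqref{eq.pbljs54}--\eqref{eq.q2bp8ae}. The selection rule is transparent: a cross-product $W_iW_j$ is resolved by whichever of those relations carries the same \emph{index difference} $|i-j|$, applied after the appropriate integer index shift. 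The available differences are $1$ (from \eqref{eq.pbljs54}), $2$ (from \eqref{eq.y9lf0gg}, read as a product of $W_{r-1}$ and $W_{r-3}$), $4$ (from \eqref{eq.qjbtgy8}) and $5$ (from \eqref{eq.gsxw34t}).

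For the first identity I would square \eqref{eq.kr1imzv}, namely $4W_{r+s}=2T_{s-1}W_{r-4}+(T_{s+4}-7T_s)W_r+4T_sW_{r+1}$, giving $16W_{r+s}^2$ on the left and the three cross-products $W_{r-4}W_r$, $W_{r-4}W_{r+1}$, $W_rW_{r+1}$ of differences $4$, $5$, $1$ respectively. I resolve $W_{r-4}W_r$ directly by \eqref{eq.qjbtgy8}, $W_rW_{r+1}$ by \eqref{eq.pbljs54} shifted $r\mapsto r+1$, and $W_{r-4}W_{r+1}$ by \eqref{eq.gsxw34t} shifted $r\mapsto r+1$; the last of these is precisely what introduces the terms $W_{r-7}^2$ and $W_{r-3}^2$ seen in the stated result. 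Collecting coefficients of each square then yields the claimed form. The second identity proceeds identically from \eqref{eq.yv19j1l}: the cross-products $W_{r-1}W_r$, $W_{r-1}W_{r+4}$, $W_rW_{r+4}$ have differences $1$, $5$, $4$, resolved by \eqref{eq.pbljs54}, by \eqref{eq.gsxw34t} shifted $r\mapsto r+4$, and by \eqref{eq.qjbtgy8} shifted $r\mapsto r+4$; the diff-$5$ resolution accounts for the appearance of $W_{r+2}^2$ and $W_{r-4}^2$.

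The third identity I would obtain by squaring \eqref{eq.ul2k2pm} and then multiplying through by $4$, which yields the prefactor $4W_{r+s}^2$ and clears the denominators arising below. The cross-products are now $W_{r-1}W_r$, $W_rW_{r+1}$ (both difference $1$) and $W_{r-1}W_{r+1}$ (difference $2$). The first two are handled by \eqref{eq.pbljs54} and its shift $r\mapsto r+1$ as before; the genuinely non-obvious step is the difference-$2$ product, which I resolve by reading \eqref{eq.y9lf0gg} as a difference-$2$ relation and applying it with the shift $r\mapsto r+2$, so that $8W_{r+1}W_{r-1}=4W_{r+2}^2+2W_{r-1}^2-W_{r+3}^2+4W_{r-2}^2-W_{r-5}^2$. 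This single substitution is exactly what produces the wide spread of terms $W_{r-5}^2,W_{r-2}^2,W_{r+2}^2,W_{r+3}^2$ that distinguishes this identity from the first two, and the factor-$8$ structure of \eqref{eq.y9lf0gg} matches the coefficient $8T_{s-1}T_s$ of the cross-product with no extra scaling, confirming that $4$ is the correct clearing factor.

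The argument has no conceptual obstacle: once the correct source formula and the difference-matched quadratic relations are identified, each identity is a forced computation. The real labour, and the only place where error is likely, is the coefficient bookkeeping -- faithfully tracking each shifted substitution and summing the several contributions to each square. I would regard the key verification as a \emph{closure} check: that after all three cross-products are eliminated, no square outside the listed index range survives (e.g.\ that the $W_{r-7}^2$, $W_{r-8}^2$ contributions cancel or land inside the quoted set), and that the final coefficients factor into the compact Tribonacci expressions displayed in the statement. I would perform this collection for each of the three identities in turn, and a numerical spot-check against Table~\ref{tab.ms2wgo8} for a convenient small $(r,s)$ gives a decisive confirmation.
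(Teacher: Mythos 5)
Your proposal is correct and is exactly the approach the paper intends: the paper states this theorem without an explicit proof, but the whole of Section~3 (the cross-term identities \eqref{eq.pbljs54}, \eqref{eq.qjbtgy8}, \eqref{eq.y9lf0gg}, \eqref{eq.gsxw34t}) is constructed precisely so that one squares the addition formulas \eqref{eq.kr1imzv}, \eqref{eq.yv19j1l}, \eqref{eq.ul2k2pm} and eliminates the cross-products by index-difference matching, as you describe. Your choice of source formula and of shifted resolution identity for each cross-product is the right one in all three cases, and the resulting coefficient collection does reproduce the stated identities (e.g.\ your difference-$2$ substitution $8W_{r+1}W_{r-1}=4W_{r+2}^2+2W_{r-1}^2-W_{r+3}^2+4W_{r-2}^2-W_{r-5}^2$ accounts for the $W_{r-5}^2,W_{r-2}^2,W_{r+2}^2,W_{r+3}^2$ terms, with $2T_{s+2}-T_{s-1}=2T_{s+1}+2T_s+T_{s-1}$ reconciling the $W_r^2$ coefficient).
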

\section{Cubic recurrence relations}
\begin{theorem}\label{thm.efw58sf}
The following identity holds for integer $r$:
\[
\begin{split}
&W_r^3  - 4W_{r - 1}^3  - 9W_{r - 2}^3  - 34W_{r - 3}^3  + 24W_{r - 4}^3  - 2W_{r - 5}^3\\
&\quad + 40W_{r - 6}^3  - 14W_{r - 7}^3  - W_{r - 8}^3  - 2W_{r - 9}^3  + W_{r - 10}^3  = 0\,.
\end{split}
\]
\end{theorem}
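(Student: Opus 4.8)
The plan is to exploit the Binet representation of $W_r$ and to recognize the claimed identity as the order-$10$ linear recurrence satisfied by the cubed sequence $(W_r^3)$. Since the characteristic polynomial $x^3-x^2-x-1$ of the defining recurrence \eqref{eq.qqyiuit} has three distinct roots $\alpha,\beta,\gamma$ (one real, two complex conjugate), every generalized Tribonacci sequence can be written as $W_r=A\alpha^r+B\beta^r+C\gamma^r$ for constants $A,B,C$ fixed by $W_0,W_1,W_2$. I would record the elementary symmetric functions $e_1=\alpha+\beta+\gamma=1$, $e_2=\alpha\beta+\beta\gamma+\gamma\alpha=-1$, $e_3=\alpha\beta\gamma=1$, and note the useful fact that the power sums $\alpha^n+\beta^n+\gamma^n$ are exactly the Tribonacci--Lucas numbers $K_n$.

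First I would cube the Binet form. By the multinomial theorem, $W_r^3=\sum_{i+j+k=3}\binom{3}{i,j,k}A^iB^jC^k\,(\alpha^i\beta^j\gamma^k)^r$, so $(W_r^3)$ is a $\C$-linear combination of the ten geometric sequences whose ratios are the ten degree-$3$ monomials $\alpha^i\beta^j\gamma^k$ with $i+j+k=3$. Consequently $(W_r^3)$ satisfies the linear recurrence with characteristic polynomial
\[
P(x)=\prod_{i+j+k=3}\bigl(x-\alpha^i\beta^j\gamma^k\bigr),
\]
a polynomial of degree $\binom{5}{2}=10$. Its coefficients are symmetric in $\alpha,\beta,\gamma$, hence polynomials in $e_1,e_2,e_3$, hence integers; this already explains \emph{why} an order-$10$ (eleven-term) relation with integer coefficients must exist for every $W$ simultaneously, independently of the initial data $A,B,C$.

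The substantive step is to compute $P$ explicitly and match it against the stated coefficients. I would obtain the power sums of its ten roots through $p_n=\sum_{i+j+k=3}(\alpha^i\beta^j\gamma^k)^n=h_3(\alpha^n,\beta^n,\gamma^n)=\tfrac16\bigl(K_n^3+3K_nK_{2n}+2K_{3n}\bigr)$, where $h_3$ is the complete homogeneous symmetric polynomial, and then convert $p_1,\dots,p_{10}$ into the elementary symmetric functions of the ten roots by Newton's identities. A quick consistency check: the product of the roots is $(\alpha\beta\gamma)^{10}=1$, giving constant term $1$, while $p_1=h_3=\tfrac16(K_1^3+3K_1K_2+2K_3)=\tfrac16(1+9+14)=4$ gives the $x^9$-coefficient $-4$; these agree with the outermost coefficients of $W_r^3$ and $-4W_{r-1}^3$ in the statement. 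Carrying the computation through should yield $P(x)=x^{10}-4x^9-9x^8-34x^7+24x^6-2x^5+40x^4-14x^3-x^2-2x+1$, whose coefficient vector $(1,-4,-9,-34,24,-2,40,-14,-1,-2,1)$ is precisely the list of coefficients in the theorem, completing the proof.

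The main obstacle is this degree-$10$ symmetric-function computation, which is finite but error-prone by hand. To tame it I would either assemble $P$ as an iterated resultant realizing the third symmetric power of $x^3-x^2-x-1$, or sidestep the closed-form symmetric-function manipulation entirely: because the same recurrence holds for \emph{all} $W$, it suffices to determine the ten unknown coefficients from one fully generic sequence. Taking $W=T$ (for which $A,B,C$ are all nonzero, so the ten monomial ratios are genuinely present and $P$ is the minimal polynomial of $(T_r^3)$), I would impose $\sum_{j=0}^{10}c_j\,T_{r-j}^3=0$ for ten consecutive values of $r$ using Table \ref{tab.ms2wgo8}, solve the resulting nonsingular linear system for $c_1,\dots,c_{10}$ with $c_0=1$, and confirm the solution annihilates further values; the abstract argument above then guarantees the identity propagates to every generalized Tribonacci sequence.
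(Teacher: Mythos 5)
Your proposal is correct, and it proves the theorem by a genuinely different route from the paper's. The paper's proof specializes Theorem \ref{thm.ash3b4f} with $a=r-8$, $b=r$, $c=r-10$, $d=0$, $e=r-s$, $s\in\{1,\dots,7,9\}$, to express each of $W_{r-1},\dots,W_{r-7},W_{r-9}$ as an explicit rational linear combination of $W_r$, $W_{r-8}$ and $W_{r-10}$, and then checks that these substitutions make the eleven-term cubic sum vanish identically as a polynomial in those three quantities; this is elementary and self-contained, but it gives no hint of where the order-$10$ relation comes from. Your argument instead identifies the identity as the symmetric-cube recurrence: with $\alpha,\beta,\gamma$ the distinct roots of $x^3-x^2-x-1$, the sequence $(W_r^3)$ lies in the span of the ten geometric sequences with ratios $\alpha^i\beta^j\gamma^k$, $i+j+k=3$, hence is annihilated by $P(x)=\prod_{i+j+k=3}(x-\alpha^i\beta^j\gamma^k)$, whose coefficients are symmetric in the roots and therefore integers; this explains at a stroke why a single integer recurrence works for all initial values $W_0,W_1,W_2$, and it generalizes to higher powers, which the paper's method does not. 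Your two spot checks are right, and the deferred computation does produce the stated coefficients; it is completed most cleanly by factoring $P$ using $\alpha\beta\gamma=1$:
\[
P(x)=(x-1)\bigl(x^3-7x^2+5x-1\bigr)\bigl(x^6+4x^5+11x^4+12x^3+11x^2+4x+1\bigr)\,,
\]
where the cubic factor has roots $\alpha^3,\beta^3,\gamma^3$ (elementary symmetric functions $K_3=7$, $(K_3^2-K_6)/2=5$, $1$) and the sextic has the six ratios $\alpha/\beta,\beta/\alpha,\dots$ as roots, with power sums $K_nK_{-n}-3$ fed into Newton's identities; expanding the product gives exactly $x^{10}-4x^9-9x^8-34x^7+24x^6-2x^5+40x^4-14x^3-x^2-2x+1$, matching the theorem term by term. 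One caveat concerns your fallback plan of fitting the ten coefficients from values of $(T_r^3)$: the nonsingularity of that linear system is equivalent to $P$ being the \emph{minimal} polynomial of $(T_r^3)$, which requires not only $A,B,C\neq 0$ but also that the ten monomials $\alpha^i\beta^j\gamma^k$ be pairwise distinct --- a point you assert only implicitly; it does hold, since any coincidence among them forces $\beta=\pm\gamma$ (or a symmetric degeneracy), and $\beta=-\gamma$ would make $1$ a root of $x^3-x^2-x-1$, which it is not. Your primary route needs no such distinctness at all, since $P$ annihilates each constituent geometric sequence whether or not the ratios repeat, so the symmetric-function computation alone yields a complete proof.
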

\begin{proof}
Setting $a=r-8$, $b=r$, $c=r-10$, $d=0$ and $e=r-s$ with $s\in\{1,2,3,4,5,6,7,9\}$ in the identity of Theorem \ref{thm.ash3b4f}, the following linear combinations are formed:
\[
W_{r - 1}  = \frac{{37}}{{68}}W_r  - \frac{5}{{68}}W_{r - 8}  + \frac{1}{{17}}W_{r - 10} ,\quad W_{r - 2}  = \frac{5}{{17}}W_r  + \frac{3}{{17}}W_{r - 8}  + \frac{1}{{17}}W_{r - 10}\,, 
\]
\[
W_{r - 3}  = \frac{{11}}{{68}}W_r  - \frac{7}{{68}}W_{r - 8}  - \frac{2}{{17}}W_{r - 10} ,\quad W_{r - 4}  = \frac{3}{{34}}W_r  - \frac{5}{{34}}W_{r - 8}  + \frac{2}{{17}}W_{r - 10}\,,
\]
\[
W_{r - 5}  = \frac{3}{{68}}W_r  + \frac{{29}}{{68}}W_{r - 8}  + \frac{1}{{17}}W_{r - 10} ,\quad W_{r - 6}  = \frac{1}{{34}}W_r  - \frac{{13}}{{34}}W_{r - 8}  - \frac{5}{{17}}W_{r - 10}\,,
\]
\[
W_{r - 7}  = \frac{1}{{68}}W_r  - \frac{{13}}{{68}}W_{r - 8}  + \frac{6}{{17}}W_{r - 10} ,\quad W_{r - 9}  = \frac{1}{{68}}W_r  - \frac{{81}}{{68}}W_{r - 8}  - \frac{{11}}{{17}}W_{r - 10}\,. 
\]
The above $W_{r-i},i\in\{1,2,3,4,5,6,7,9\}$ verify the identity of Theorem \ref{thm.efw58sf}.
\end{proof}
Taking the cube in identities \eqref{eq.df8s42u} and \eqref{eq.xnvfyzv} and solving two simultaneous equations, we find
\begin{equation}\label{eq.q4ahbd9}
155736W_{r}W_{r-1}^2 = -W_{r-17}^3+199305W_{r-1}^3-161504W_{r-4}^3+14112W_{r}^3
\end{equation}
and
\begin{equation}\label{eq.zs0hzum}
77868W_{r}^2W_{r-1} = -148526W_{r-4}^3+27090W_{r}^3-W_{r-17}^3+95481W_{r-1}^3\,.
\end{equation}
\begin{theorem}
The following identity holds for integer $r$:
\begin{equation}
\begin{split}
&11844W_{r}^3+3W_{r-19}^3+W_{r-17}^3+458556W_{r-6}^3\\
&\quad+135548W_{r-4}^3-442179W_{r-3}^3-120204W_{r-2}^3-43569W_{r-1}^3=0\,.
\end{split}
\end{equation}
\end{theorem}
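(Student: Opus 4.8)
The plan is to reduce the claimed pure-cube identity to the two cross-product formulas \eqref{eq.q4ahbd9} and \eqref{eq.zs0hzum} already established, together with their index-shifted copies. The starting point is the defining recurrence \eqref{eq.qqyiuit} written in the symmetric form $W_r - W_{r-1} = W_{r-2} + W_{r-3}$. Cubing both sides gives
\[
W_r^3 - 3W_r^2 W_{r-1} + 3W_r W_{r-1}^2 - W_{r-1}^3 = W_{r-2}^3 + 3W_{r-2}^2 W_{r-3} + 3W_{r-2}W_{r-3}^2 + W_{r-3}^3,
\]
so that the only non-cube terms are the four mixed products $W_r^2W_{r-1}$, $W_rW_{r-1}^2$, $W_{r-2}^2W_{r-3}$ and $W_{r-2}W_{r-3}^2$. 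The entire task is therefore to rewrite these four products as linear combinations of pure cubes.

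For the first pair, identities \eqref{eq.q4ahbd9} and \eqref{eq.zs0hzum} already express $155736\,W_rW_{r-1}^2$ and $77868\,W_r^2W_{r-1}$ in terms of the cubes $W_r^3$, $W_{r-1}^3$, $W_{r-4}^3$ and $W_{r-17}^3$. For the second pair I would simply replace $r$ by $r-2$ throughout \eqref{eq.q4ahbd9} and \eqref{eq.zs0hzum}; this yields companion formulas for $155736\,W_{r-2}W_{r-3}^2$ and $77868\,W_{r-2}^2W_{r-3}$ in terms of the cubes $W_{r-2}^3$, $W_{r-3}^3$, $W_{r-6}^3$ and $W_{r-19}^3$. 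The union of index sets $\{r,r-1,r-4,r-17\}\cup\{r-2,r-3,r-6,r-19\}$, together with the cubes already present on both sides of the cubed recurrence, accounts for precisely the eight indices $r,r-1,r-2,r-3,r-4,r-6,r-17,r-19$ appearing in the theorem, which is reassuring.

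To carry this out cleanly I would first multiply the cubed recurrence through by $155736$, so that the coefficients $155736$ and $77868=155736/2$ are cleared exactly upon substitution: each occurrence of the squared-first products $W_r^2W_{r-1}$ and $W_{r-2}^2W_{r-3}$ picks up an extra factor $2$ relative to its formula \eqref{eq.zs0hzum}, while the products $W_rW_{r-1}^2$ and $W_{r-2}W_{r-3}^2$ substitute directly via \eqref{eq.q4ahbd9}. Collecting like cubes then produces a relation in the eight cubes with integer coefficients, and dividing out the common factor $3$ yields exactly the stated identity. The main obstacle is purely bookkeeping: one must track the factor $2$ linking \eqref{eq.q4ahbd9} and \eqref{eq.zs0hzum}, keep the signs of the mixed products straight through the binomial expansion, and finally recognise the overall factor of $3$ shared by every resulting coefficient. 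No new structural idea beyond the cubed recurrence and the shift $r\mapsto r-2$ is required.
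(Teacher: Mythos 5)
Your proposal is correct and follows essentially the same route as the paper: cube the recurrence $W_r-W_{r-1}=W_{r-2}+W_{r-3}$ and eliminate the four cross terms via identities \eqref{eq.q4ahbd9} and \eqref{eq.zs0hzum}, applying them at the shifted index $r\mapsto r-2$ for the products $W_{r-2}^2W_{r-3}$ and $W_{r-2}W_{r-3}^2$. The only difference is that you spell out the index shift and the bookkeeping (the factor $2$ between the two identities and the final common factor $3$) that the paper leaves implicit, and these details check out against the stated coefficients.
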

\begin{proof}
Write the defining recurrence relation of the generalized Tribonacci numbers as $W_r-W_{r-1}=W_{r-2}+W_{r-3}$, take the cube of both sides and use identities \eqref{eq.q4ahbd9} and \eqref{eq.zs0hzum} to remove cross terms.
\end{proof}

\hrule

\noindent 2010 {\it Mathematics Subject Classification}:
Primary 11B39; Secondary 11B37.

\noindent \emph{Keywords: }
Tribonacci number, Tribonacci-Lucas number, recurrence relation.

\hrule

\end{document}